\newcommand{\iid}{i.i.d.}
\DeclareMathOperator{\E}{E}
\DeclareMathOperator{\Var}{Var}
\newcommand{\diff}{\mathrm d}
\newcommand{\vahp}{\hat p}
\newcommand{\vahd}{\hat d}
\newcommand{\odds}{\gamma}
\newcommand{\vahodds}{\hat \odds}
\newcommand{\logodds}{\phi}
\newcommand{\vahlogp}{\hat \lambda}
\newcommand{\vahlogodds}{\hat \logodds}
\newcommand{\nsuc}{r}
\newcommand{\vansam}{N}
\newcommand{\nsam}{n}
\newcommand{\kvar}{k}
\newcommand{\effic}{\eta}
\newcommand{\vab}{X}
\newcommand{\fgen}{g}
\newcommand{\paramgen}{\theta}
\newcommand{\vahparamgen}{\hat \theta}
\newcommand{\coefdg}{c}
\newcommand{\harm}{H}
\newcommand{\hasbound}{\alpha}
\newcommand{\vertdiff}{\beta}
\newcommand{\hasgen}{\zeta}
\newtheorem{proposition}{Proposition}
\newtheorem{theorem}{Theorem}
\newtheorem{lemma}{Lemma}
\begin{document}

\title{
Estimating odds and log odds with guaranteed accuracy
}

\author{Luis Mendo\\
Universidad Polit\'ecnica de Madrid, Spain\\
\texttt{luis.mendo@upm.es}\\
}

\maketitle

\begin{abstract}
Two sequential estimators are proposed for the odds $p/(1-p)$ and log odds $\log(p/(1-p))$ respectively, using independent Bernoulli random variables with parameter $p$ as inputs. The estimators are unbiased, and guarantee that the variance of the estimation error divided by the true value of the odds, or the variance of the estimation error of the log odds, are less than a target value for any $p \in (0,1)$. The estimators are close to optimal in the sense of Wolfowitz's bound.

\emph{Keywords:} Estimation, odds, log odds, mean-square error, efficiency.

\emph{MSC2010:} 62F10, 62L12.
\end{abstract}

\section{Introduction}
\label{part: intro}

Consider a probability $p \in(0,1)$, and let $\odds = p/(1-p)$ and $\logodds = \log\odds$ respectively denote the \emph{odds} and \emph{log odds} associated with $p$. These are commonly used measures of the probability of an event, which find application in many fields including clinical trials, epidemiology, economics, sociology or data analysis. Also, log odds are inherent to logistic regression, which is a prevalent tool in medical and social sciences \citep{Agresti02, Norton18},
and is widely used in machine learning \citep{Bishop06}.

For an estimator $\vahodds$ of $\odds$, a usual measure of accuracy is the \emph{mean-square error} (MSE), $\E[(\vahodds-\odds)^2]$, or its square root, known as root-mean-square error (RMSE). Either of these measures of error is most meaningful in a \emph{relative} sense. For example, an RMSE equal to $0.5$ may be adequate if the true odds are $\odds = 9$ (corresponding to $p=0.9$), but unacceptable if $\odds = 0.25$ ($p=0.2$). It is reasonable to require that the RMSE be proportional to the true (unknown) value of $\odds$; or equivalently that the MSE be proportional to $\odds^2$. The estimator $\vahodds$ should ideally guarantee $\E[(\vahodds-\odds)^2] / \odds^2$ smaller than a \emph{prescribed} value, irrespective of $\odds$.

When estimating the log odds $\logodds$, on the other hand, the error should not be understood in a relative sense, because the logarithm already has a normalizing effect, as it transforms ratios into differences. Specifically, a difference $\delta$ between $\logodds$ and an estimate $\vahlogodds$ thereof corresponds to a deviation by a factor $\exp \delta$ between $\odds$ and its estimate $\vahodds$ defined as $\exp \vahlogodds$. That is, relative error in estimating $\odds$ corresponds to absolute (non-normalized) error in estimating $\logodds$.

This paper deals with the problem of estimating $\odds$ and $\logodds$ from independent, identically distributed (\iid)~observations of a Bernoulli distribution with parameter $p$. Sequential estimators of $\odds$ and $\logodds$ are presented that achieve a specified level of accuracy, defined as relative or absolute MSE respectively. The sequential character of the estimators cannot be avoided, because a fixed sample size cannot satisfy these conditions for all $p$. The proposed estimators are based on \emph{inverse binomial sampling} \citetext{\citealp{Haldane45}; \citealp{Mikulski76}; \citealp[chapter~2]{Lehmann98}}, and have a number of desirable features, in addition to guaranteeing a target accuracy: they are simple to compute; they are unbiased; and their performance is close to optimal, in the sense that the relationship between accuracy and average sample size is only slightly worse than that given by Wolfowitz's bound \citep{Wolfowitz47}.

The problems of estimating the odds and the log odds, as well as related parameters such as the ratio of two probabilities or the corresponding odds ratio,
have drawn a great deal of attention over the years. However, to the author's knowledge, no estimator has been proposed that can guarantee a specified level of accuracy as discussed above, i.e.~in a relative sense for the odds or in an absolute sense for the log odds, for all values of $p$. A review of existing results in this area is given next.

Early works by \citet{Gart67} and \citet{Wells87} focus on analysing the bias and mean-square error of several estimators of the odds under binomial (i.e.~fixed-size) sampling. \citet{Siegmund82} studies the asymptotic properties of estimators of the odds and the odds ratio. \citet{Sungboonchoo21} propose estimators of the odds with exponentially decreasing bias, using either binomial or inverse binomial sampling; and provide asymptotic confidence intervals for estimators of the odds ratio. \citet{Ngamkham16} and \citet{Ngamkham18} consider interval estimation of the ratio of two probabilities, when each of the two populations is sampled independently using either a fixed sample size or inverse binomial sampling, and give asymptotic expressions for confidence intervals with a desired coverage probability. Similarly, \citet{Tian08} discuss the estimation of the ratio of two probabilities using independent inverse binomial sampling for each population, and review several approaches for obtaining approximate confidence intervals.
\citet{Cho07} estimates the probability ratio under a loss function defined as the sum of squared error and cost proportional to the number of observations, with samples taken in pairs, one from each population; and
proposes a sequential procedure which is asymptotically optimal when the cost per sample is small. \citet{Cho13} presents a sequential estimator for a probability ratio when the proportion of sample sizes is specified, and studies its asymptotic properties. \citet{Kokaew21, Kokaew23} consider correlation between observations of the two populations, assuming sample sets of fixed size; and propose estimators of the probability ratio or its logarithm, for which they derive asymptotic confidence intervals. \citet{Agresti99} analyses confidence intervals for the log odds ratio when independent binomial sampling is applied to each population, and \citet{Bandyopadhyay17} address the more general setting where the two populations are sampled using different combinations of binomial and inverse binomial sampling.

The following terminology and simple facts will be used throughout the paper. The two possible values of the Bernoulli distribution will be referred to as ``success'' and ``failure'', which occur with probabilities $p$ and $1-p$ respectively. In inverse binomial sampling, given $\nsuc \in \mathbb N$, a sequence of \iid~Bernoulli variables with parameter $p$ is observed until the $\nsuc$-th success is obtained. The required number of samples, $\vansam$, has a \emph{negative binomial} distribution:
\begin{equation}
\Pr[\vansam = \nsam] = \binom{\nsam-1}{\nsuc-1} p^\nsuc (1-p)^{\nsam-\nsuc},\ \ \ \nsam \geq \nsuc.
\end{equation}
(An alternative definition, not used in this paper, considers the negative binomial distribution as that of $\vansam-\nsuc$.) For this distribution \citetext{\citealp{Haldane45}; \citealp[chapter~4]{Ross10}},
\begin{align}
\label{eq: neg bin: E vansam}
\E[\vansam] &= \frac{\nsuc}{p}, \\
\label{eq: neg bin: E inv minus 1}
\E\left[\frac 1{\vansam-1}\right] &= \frac{p}{\nsuc-1}, \\
\label{eq: neg bin: Var vansam}
\Var[\vansam] &= \frac{\nsuc (1-p)}{p^2}.
\end{align}

\section{Estimation of odds}
\label{part: odds}

The estimator of $\odds = p/(1-p)$ to be proposed is based on estimating $p$ and $1/(1-p)$ separately by means of inverse binomial sampling. Specifically, given $\nsuc_1 \in \mathbb N$, $\nsuc_1 \geq 3$, a random number $\vansam_1$ of samples are observed until $\nsuc_1$ \emph{successes} occur. Then $\vahp_1 = (\nsuc_1-1)/(\vansam_1-1)$ is an unbiased estimator of $p$ as a consequence of \eqref{eq: neg bin: E inv minus 1}, and its variance satisfies the bound \citep{Pathak84}
\begin{equation}
\label{eq: IBS: Var hatp}
\Var[\vahp_1] \leq \frac{p^2 (1-p)}{\nsuc_1-2+2p}.
\end{equation}

Similarly, for $\nsuc_2 \in \mathbb N$, a second, independent set of samples is taken until $\nsuc_2$ \emph{failures} are observed. The number of samples $\vansam_2$ is a negative binomial random variable with parameters $\nsuc_2$ and $1-p$. Replacing $\nsuc$ and $p$ in \eqref{eq: neg bin: E vansam} and \eqref{eq: neg bin: Var vansam} by $\nsuc_2$ and $1-p$, it follows that $\vahd_2 = \vansam_2/\nsuc_2$ is an unbiased estimator of $1/(1-p)$, and
\begin{equation}
\label{eq: FS: Var hatd}
\Var[\vahd_2] = \frac{\Var[\vansam_2]}{\nsuc_2^2} = \frac{p}{\nsuc_2 (1-p)^2}.
\end{equation}

Given $\nsuc \in \mathbb N$, $\nsuc \geq 2$, the estimator of $\odds$ is defined as
\begin{equation}
\label{eq: vahodds}
\vahodds = \vahp_1 \vahd_2
\end{equation}
where $\vahp_1$ and $\vahd_2$ are obtained as above with $\nsuc_1 = \nsuc+1$ and $\nsuc_2 = \nsuc-1$. Let $\vansam = \vansam_1 + \vansam_2$ denote the total number of samples used by this estimator.

\begin{theorem}
\label{theo: vahodds}
For $\nsuc \in \mathbb N$, $\nsuc \geq 2$, $p \in (0,1)$, the estimator $\vahodds$ given by \eqref{eq: vahodds} has the following properties:
\begin{align}
\label{eq: E vahodds unbiased}
\E[\vahodds] &= \odds \\
\label{eq: Var vahodds bound}
\frac{\Var[\vahodds]}{\odds^2} &\leq \frac 1 {\nsuc-1} \left( 1 - \frac{p(1-p)}{\nsuc-1+2p} \right) \\
\label{eq: Var vahodds bound guar}
&< \frac 1 {\nsuc-1} \\[1mm] 
\label{eq: vahodds sample size}
\E[\vansam] &= \frac{\nsuc+1-2p}{p(1-p)}.
\end{align}
\end{theorem}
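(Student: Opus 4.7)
The plan is to exploit the independence of $\vahp_1$ and $\vahd_2$, which follows from the two inverse binomial sampling stages using disjoint Bernoulli observations. Once independence is used, all four claims reduce to bookkeeping with the moment formulas \eqref{eq: neg bin: E vansam}, \eqref{eq: neg bin: E inv minus 1} and the variance results \eqref{eq: IBS: Var hatp}, \eqref{eq: FS: Var hatd} already established above.

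For unbiasedness \eqref{eq: E vahodds unbiased} I write $\E[\vahodds] = \E[\vahp_1]\,\E[\vahd_2] = p\cdot(1-p)^{-1} = \odds$, since $\vahp_1$ and $\vahd_2$ have already been identified as unbiased estimators of $p$ and $1/(1-p)$, respectively. For the average sample size \eqref{eq: vahodds sample size} I add $\E[\vansam_1]=(\nsuc+1)/p$ from \eqref{eq: neg bin: E vansam} with $\nsuc_1=\nsuc+1$, and $\E[\vansam_2]=(\nsuc-1)/(1-p)$ from the same formula with parameters $\nsuc_2=\nsuc-1$ and $1-p$; putting over the common denominator $p(1-p)$ and expanding the numerator yields $(\nsuc+1-2p)/(p(1-p))$.

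The substantive step is the variance bound \eqref{eq: Var vahodds bound}. Independence yields the standard product-variance identity
\[
\Var[\vahodds] = \Var[\vahp_1]\Var[\vahd_2] + \Var[\vahp_1]\,(\E[\vahd_2])^2 + (\E[\vahp_1])^2\,\Var[\vahd_2].
\]
Plugging in \eqref{eq: IBS: Var hatp} with $\nsuc_1=\nsuc+1$, which becomes $\Var[\vahp_1]\le p^2(1-p)/(\nsuc-1+2p)$, and \eqref{eq: FS: Var hatd} with $\nsuc_2=\nsuc-1$, giving $\Var[\vahd_2]=p/((\nsuc-1)(1-p)^2)$, and dividing through by $\odds^2 = p^2/(1-p)^2$, I obtain three rational expressions in $p$ and $\nsuc$. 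Combining them over the common denominator $(\nsuc-1)(\nsuc-1+2p)$, I expect the numerator to simplify to $\nsuc-1+p+p^2 = (\nsuc-1+2p)-p(1-p)$, which factors into the right-hand side of \eqref{eq: Var vahodds bound}. The strict bound \eqref{eq: Var vahodds bound guar} then follows because $p(1-p)>0$ for $p\in(0,1)$.

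I do not expect any deep obstacle; the argument is essentially algebraic. The one place where care is needed is consistently applying the index shifts, in particular reading \eqref{eq: IBS: Var hatp} off with $\nsuc_1=\nsuc+1$ so that $\nsuc_1-2+2p = \nsuc-1+2p$, and writing out the three-term identity for $\Var[XY]$ correctly before the algebraic collapse into the stated compact form.
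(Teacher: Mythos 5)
Your proposal is correct and follows essentially the same route as the paper: independence of $\vahp_1$ and $\vahd_2$ for unbiasedness and for the product-variance identity (the paper writes it as $(\Var[\vahp_1]+p^2)(\Var[\vahd_2]+1/(1-p)^2)-p^2/(1-p)^2$, which is the same three-term expansion you use), followed by the same algebraic collapse to $\nsuc-1+p+p^2=(\nsuc-1+2p)-p(1-p)$. The index shifts and the final simplification all check out.
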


\begin{proof}
The estimators $\vahp_1$ and $\vahd_2$ are statistically independent, because they are based on two separate sets of samples. Therefore
\begin{equation}
\E[\vahodds] = \E[\vahp_1] \E[\vahd_2] = \frac p {1-p} = \odds,
\end{equation}
which proves \eqref{eq: E vahodds unbiased}.

Similarly, using \eqref{eq: IBS: Var hatp} and \eqref{eq: FS: Var hatd} with $\nsuc_1 = \nsuc+1$, $\nsuc_2 = \nsuc-1$,
\begin{equation}
\begin{split}
\Var[\vahodds] &= \E[\vahp_1^2] \E[\vahd_2^2] - \frac{p^2}{(1-p)^2} \\
&= \left( \Var[\vahp_1] + p^2  \right) \left( \Var[\vahd_2] + \frac 1 {(1-p)^2} \right) - \frac{p^2}{(1-p)^2} \\
&\leq \left( \frac{p^2 (1-p)}{\nsuc-1+2p} + p^2 \right) \left( \frac{p}{(\nsuc-1) (1-p)^2} + \frac 1 {(1-p)^2} \right) - \frac{p^2}{(1-p)^2} \\
&= \frac{p^2}{(1-p)^2} \left( \frac{p(1-p)}{(\nsuc-1+2p)(\nsuc-1)} + \frac{1-p}{\nsuc-1+2p} + \frac p {\nsuc-1} \right) \\
&= \frac{(\nsuc-1+p+p^2)p^2}{(\nsuc-1)(\nsuc-1+2p)(1-p)^2} \\
&= \frac{p^2}{(\nsuc-1)(1-p)^2} \left(1 - \frac{p(1-p)}{\nsuc-1+2p} \right) < \frac {p^2} {(\nsuc-1)(1-p)^2},
\end{split}
\end{equation}
as in \eqref{eq: Var vahodds bound} and \eqref{eq: Var vahodds bound guar}.

The average number of samples is obtained as
\begin{equation}
\E[\vansam] = \E[\vansam_1] + \E[\vansam_2] = \frac{\nsuc+1} p + \frac{\nsuc-1} {1-p} = \frac{\nsuc+1-2p}{p(1-p)},
\end{equation}
in agreement with \eqref{eq: vahodds sample size}.
\end{proof}

From Theorem~\ref{theo: vahodds} it stems that a relative error as low as desired can be guaranteed, irrespective of $p$, by choosing $\nsuc$ large enough. For example, to achieve an RMSE smaller than $20\%$ of the true value of $\odds$
it is sufficient to take $\nsuc = 26$, according to \eqref{eq: Var vahodds bound guar}. Guaranteeing better accuracy values requires larger $\nsuc$, which in turn causes the average sample size to increase, as given by \eqref{eq: vahodds sample size}. As a continuation of the example, decreasing the target RMSE by a factor of $1/\sqrt{2}$, i.e.~reducing the target MSE to one half, requires an increase of $\nsuc$ from $26$ to $51$, with the result that $\E[\vansam]$ is approximately doubled.

To assess how good the trade-off between accuracy and average sample size is, it is meaningful to use Wolfowitz's bound, which is a generalization of the Cram\'er-Rao inequality for sequential estimators \citetext{\citealp{Wolfowitz47}; \citealp[section~4.3]{Ghosh97}}. The bound indicates the lowest variance that an estimator can have for a given average sample size, provided that certain (mild) regularity conditions are satisfied by the distribution function of the observations, the sequential procedure and the estimator. When the observations are \iid~Bernoulli random variables with parameter $p$ and the parameter to be estimated, $\paramgen$, is a differentiable function of $p$, Wolfowitz's bound for an unbiased estimator $\vahparamgen$ is \citep[lemma~2.7]{DeGroot59}
\begin{equation}
\label{eq: Wolfowitz Bernoulli unbiased}
\Var[\vahparamgen] \E[\vansam] \geq \left(\displaystyle \frac{\diff \paramgen}{\diff p}\right)^2 p (1-p).
\end{equation}
The \emph{efficiency} of $\vahparamgen$, which will be denoted as $\effic(\vahparamgen)$, can then be defined as Wolfowitz's bound for $\Var[\vahparamgen] \E[\vansam]$ divided by the actual value of that product achieved by $\vahparamgen$:
\begin{equation}
\label{eq: effic vahparamgen}
\effic(\vahparamgen) = \frac{({\diff \paramgen}/{\diff p})^2\, p (1-p)}{\Var[\vahparamgen] \E[\vansam]}.
\end{equation}
Taking $\paramgen = \odds$, $\vahparamgen = \vahodds$, which implies that $\diff \paramgen / \diff p = 1/(1-p)^2$ and that $\Var[\vahodds]$ and $\E[\vansam]$ satisfy \eqref{eq: Var vahodds bound} and \eqref{eq: vahodds sample size} respectively,  \eqref{eq: effic vahparamgen} becomes
\begin{align}
\label{eq: effic vahodds eq}
\effic(\vahodds) &= \frac{p}{(1-p)^3 \Var[\vahodds] \E[\vansam]} \\
\label{eq: effic vahodds bound 1}
& > \frac {\nsuc-1} {\nsuc+1-2p} \left(1 + \frac{p(1-p)}{\nsuc-1+p+p^2} \right) \\
\label{eq: effic vahodds bound 2}
& > \frac {\nsuc-1} {\nsuc+1}.
\end{align}
Thus, the efficiency of $\vahodds$ approaches $1$ for large $\nsuc$.



Figure~\ref{fig: odds_effic_p1m1} compares, for several values of $\nsuc$, the lower bound \eqref{eq: effic vahodds bound 1} with the actual efficiency, obtained by Monte Carlo simulation. Specifically, for each~$p$, $10^8$ values of the estimate $\vahodds$ are simulated using Bernoulli random variables with parameter $p$; the sample variance of $\vahodds$ is computed; and this is used instead of the true variance in \eqref{eq: effic vahodds eq} to obtain $\effic(\vahodds)$. As can be seen, the bound is quite tight, especially for $p$ near $0$ or $1$ or for $\nsuc$ not too small.

\begin{figure}%
\centering%
\includegraphics[width=.75\textwidth]{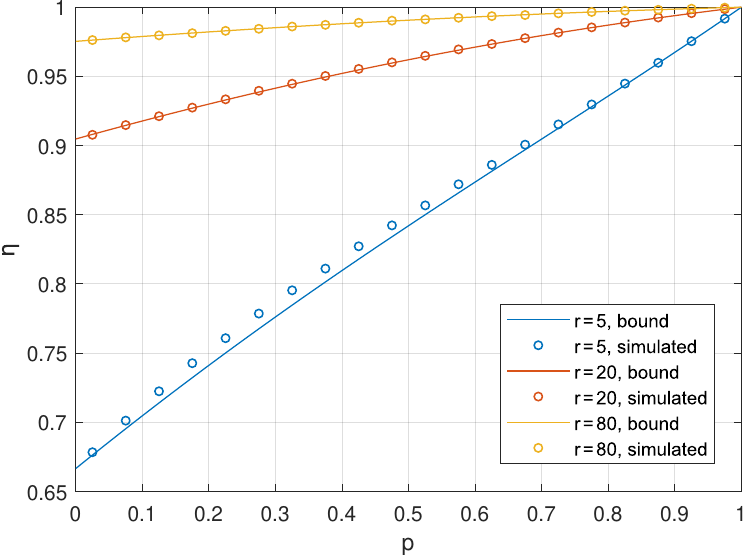}
\caption{Efficiency of odds estimator $\vahodds$}%
\label{fig: odds_effic_p1m1}%
\end{figure}%

\section{Estimation of log odds}
\label{part: log odds}

The estimator of $\logodds = \log(p/(1-p))$, to be defined precisely later, will be based on estimating $\log p$ and $\log (1-p)$ separately using inverse binomial sampling. Thus the estimation of $\log p$ is analysed next.

Given $\nsuc \in \mathbb N$, $\nsuc \geq 2$, and denoting by $\vansam$ the number of samples required to obtain $\nsuc$ successes, an unbiased estimator of $\log p$ can be obtained from $\nsuc$ and $\vansam$ using the following result by \citet[theorem~4.1]{DeGroot59}. Let $q=1-p$. Given a function $\fgen(q)$, there exists an unbiased estimator of $\paramgen = \fgen(q)$ in inverse binomial sampling if and only if $\fgen(q)$ has a Taylor expansion around $q=0$ valid for $|q|<1$; and in that case the estimator $\vahparamgen(\vansam)$ is given by
\begin{align}
\label{eq: vahfgen}
\vahparamgen(\vansam) &= \frac{(\nsuc-1)! \, \coefdg_{\nsuc, \vansam-\nsuc}}{(\vansam-1)!}, \\
\label{eq: coefcdg}
\coefdg_{\nsuc, \kvar} &= \frac{\diff^\kvar}{\diff q^\kvar} \left. \frac{\fgen(q)}{(1-q)^\nsuc} \right\rfloor_{q=0}.
\end{align}

Let the $\nsam$-th harmonic number be denoted as $\harm_\nsam = 1 + 1/2 + \cdots + 1/\nsam$.

\begin{proposition}
\label{prop: vahlogp harm}
For $\fgen(q) = \log(1-q) = \log p$, \eqref{eq: vahfgen} and \eqref{eq: coefcdg} yield
\begin{equation}
\label{eq: DeGroot logp}
\vahparamgen(\vansam) = - \left(\frac 1 {\nsuc} + \frac 1 {\nsuc+1} + \cdots + \frac 1 {\vansam-1} \right) =  -\harm_{\vansam-1} + \harm_{\nsuc-1}.
\end{equation}
\end{proposition}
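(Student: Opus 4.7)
The plan is to apply DeGroot's formulas \eqref{eq: vahfgen}--\eqref{eq: coefcdg} with $\fgen(q) = \log(1-q) = \log p$, and then to reduce the resulting finite sum to a harmonic difference.

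First I would compute $\coefdg_{\nsuc,\vansam-\nsuc}$ by the Leibniz rule applied to the product $\log(1-q)\cdot(1-q)^{-\nsuc}$. The two factors have elementary Taylor coefficients at $q=0$: $(\diff^j/\diff q^j)\log(1-q)\rfloor_{q=0}=-(j-1)!$ for $j\geq 1$ (the $j=0$ contribution vanishes since $\log 1=0$), together with $(\diff^i/\diff q^i)(1-q)^{-\nsuc}\rfloor_{q=0}=(\nsuc+i-1)!/(\nsuc-1)!$ for $i\geq 0$. Substituting into \eqref{eq: vahfgen} with $\kvar=\vansam-\nsuc$ and regrouping factorials into binomial coefficients would yield
\[
\vahparamgen(\vansam)=-\sum_{j=1}^{\vansam-\nsuc}\frac{1}{j}\cdot\frac{\binom{\vansam-\nsuc}{j}}{\binom{\vansam-1}{j}}.
\]

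The proof then reduces to the combinatorial identity
\[
\sum_{j=1}^{\kvar}\frac{1}{j}\cdot\frac{\binom{\kvar}{j}}{\binom{\nsuc+\kvar-1}{j}}=\sum_{i=\nsuc}^{\nsuc+\kvar-1}\frac{1}{i},
\]
which, combined with the previous display (and $\kvar=\vansam-\nsuc$), immediately gives $\vahparamgen(\vansam)=\harm_{\nsuc-1}-\harm_{\vansam-1}$, as claimed. To prove the identity I would invoke the Beta-function representation $1/[j\binom{\nsuc+\kvar-1}{j}]=\int_0^1 t^{j-1}(1-t)^{\nsuc+\kvar-j-1}\,\diff t$, interchange sum and integral, factor out $(1-t)^{\nsuc-1}/t$, and apply the binomial theorem to collapse $\sum_{j=1}^{\kvar}\binom{\kvar}{j}t^j(1-t)^{\kvar-j}=1-(1-t)^{\kvar}$. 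The remaining factor $[1-(1-t)^{\kvar}]/t=\sum_{m=0}^{\kvar-1}(1-t)^m$ is a finite geometric series, so termwise integration produces $\sum_{m=0}^{\kvar-1}1/(\nsuc+m)$, which is the claimed right-hand side.

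The main obstacle is the combinatorial identity; the Beta-integral trick is the cleanest route I see. A more mechanical alternative would be induction on $\kvar$ via a recursion for $\coefdg_{\nsuc,\kvar}$ obtained by differentiating $\log(1-q)/(1-q)^{\nsuc-1}$, but tracking the inductive hypothesis is noticeably less transparent than the one-line integral collapse above.
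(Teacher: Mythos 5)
Your proof is correct, but it takes a genuinely different route from the paper's. The paper never writes $\coefdg_{\nsuc,\kvar}$ in closed form: it verifies $\coefdg_{\nsuc,0}=0$ and the recurrence $\coefdg_{\nsuc,\kvar} = -\nsuc^{(\kvar-1)} + (\nsuc+\kvar-1)\,\coefdg_{\nsuc,\kvar-1}$ (with $\nsuc^{(\kvar)}$ the rising factorial), which upon substitution into \eqref{eq: vahfgen} collapses to the telescoping relation $\vahparamgen(\nsam) = -1/(\nsam-1) + \vahparamgen(\nsam-1)$ with $\vahparamgen(\nsuc)=0$, from which \eqref{eq: DeGroot logp} is immediate. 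This is essentially the ``mechanical alternative'' you set aside at the end, except that the recursion is run at the level of $\vahparamgen$ rather than of $\coefdg$, so there is no awkward inductive hypothesis to carry and the whole argument fits in a few lines. Your route instead extracts the explicit closed form $\vahparamgen(\vansam)=-\sum_{j=1}^{\kvar}\frac{1}{j}\binom{\kvar}{j}\big/\binom{\nsuc+\kvar-1}{j}$ with $\kvar=\vansam-\nsuc$ --- your Leibniz computation and factorial bookkeeping check out --- and then evaluates the sum through the Beta representation $1/\bigl[j\binom{\nsuc+\kvar-1}{j}\bigr]=\int_0^1 t^{j-1}(1-t)^{\nsuc+\kvar-j-1}\,\diff t$; the interchange is over a finite sum, the binomial-theorem and geometric-series collapses are sound, and the empty-sum case $\vansam=\nsuc$ is consistent with $\vahparamgen(\nsuc)=0$. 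What your approach buys is an explicit expression for the coefficients and a self-contained combinatorial identity that makes transparent where the harmonic numbers come from; what it costs is length relative to the paper's short telescoping argument.
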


\begin{proof}
Let $x^{(\kvar)}$ denote the rising factorial, $x^{(\kvar)} = x(x+1)\cdots(x+\kvar-1)$, $x^{(0)} = 1$. Direct computation using \eqref{eq: coefcdg} with $\fgen(q) = \log(1-q)$ shows that $c_{\nsuc, 0} = 0$, and for $\kvar > 0$ this recurrence relation holds:
\begin{equation}
\label{eq: coefdg recur}
\coefdg_{\nsuc, \kvar} = -\nsuc^{(\kvar-1)} + (\nsuc+\kvar-1) \coefdg_{\nsuc, \kvar-1}.
\end{equation}
From \eqref{eq: vahfgen} and \eqref{eq: coefdg recur} it stems that $\vahparamgen(\nsuc) = 0$, and for $\nsam > \nsuc$
\begin{equation}
\begin{split}
\vahparamgen(\nsam) &= \frac {(\nsuc-1)! \, (-\nsuc^{(\nsam-\nsuc-1)} + (\nsam-1) \coefdg_{\nsuc, \nsam-\nsuc-1})} {(\nsam-1)!} \\
&= -\frac {1} {\nsam-1} + \frac {(\nsuc-1)! \, \coefdg_{\nsuc, \nsam-\nsuc-1}} {(\nsam-2)!}
= -\frac {1} {\nsam-1} + {\vahparamgen(\nsam-1)},
\end{split}
\end{equation}
which is equivalent to \eqref{eq: DeGroot logp}.
\end{proof}

By Proposition~\ref{prop: vahlogp harm}, an unbiased estimator $\vahlogp$ of $\log p$ using inverse binomial sampling is
\begin{equation}
\label{eq: vahlogp}
\vahlogp = -\harm_{\vansam-1} + \harm_{\nsuc-1},
\end{equation}
where $\nsuc$ is the number of successes and $\vansam \geq \nsuc$ is the number of observations. The variance of this estimator is bounded as shown next. 

\begin{theorem}
\label{theo: vahlogp}
For $\nsuc \in \mathbb N$, $\nsuc \geq 2$, $p \in (0,1)$, the variance of $\vahlogp$ satisfies
\begin{align}
\label{eq: E vahlogp bound}
\Var[\vahlogp]
&< \frac 1 {\nsuc-1+p} \left( \left( 1 + \frac {p} {2\nsuc-1} - \frac {1} {4\nsuc(2\nsuc-1)}\right) (1-p) + \frac p {4(\nsuc-1)} \right) \\
\label{eq: E vahlogp bound guar}
&< \frac 1 {\nsuc-1}.
\end{align}
\end{theorem}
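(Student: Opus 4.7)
The variance equals $\Var[\harm_{\vansam-1}]$, since the two expressions in \eqref{eq: vahlogp} differ only by the deterministic shift $\harm_{\nsuc-1}$. To handle this variance I would use the integral representation $\harm_{n-1} = \int_0^1 (1-t^{n-1})/(1-t)\,dt$, valid for $n\ge 1$, which gives $\vahlogp = \int_0^1 (t^{\vansam-1}-t^{\nsuc-1})/(1-t)\,dt$. By Fubini and unbiasedness,
$$\Var[\vahlogp] \;=\; \int_0^1\!\int_0^1 \frac{\operatorname{Cov}(s^{\vansam-1},t^{\vansam-1})}{(1-s)(1-t)}\,ds\,dt.$$
This is the natural analytic setup because the denominator $(1-s)(1-t)$ will be cancelled by a factor present in the covariance.

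The next step is to use the negative binomial probability generating function $\E[z^{\vansam-1}] = p^\nsuc z^{\nsuc-1}/(1-(1-p)z)^\nsuc$ to put the covariance in closed form. Writing $q=1-p$, $a=(1-qs)(1-qt)$, and $b=p(1-qst)$, direct expansion gives the key identity $a-b = q(1-s)(1-t)$. Combined with the factorization $a^\nsuc-b^\nsuc=(a-b)\sum_{i=0}^{\nsuc-1}a^{\nsuc-1-i}b^i$, this expresses the covariance as $q(1-s)(1-t)(st)^{\nsuc-1}$ times a sum of rational functions in $s,t$ that are bounded on $[0,1]^2$. The $(1-s)(1-t)$ factor cancels the denominator of the integrand, leaving
$$\Var[\vahlogp] \;=\; pq \sum_{i=0}^{\nsuc-1} p^{\nsuc+i-1} \int_0^1\!\int_0^1 \frac{(st)^{\nsuc-1}}{a^{i+1}(1-qst)^{\nsuc-i}}\,ds\,dt,$$
a sum of $\nsuc$ positive, manifestly convergent double integrals.

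To bound each integral I would use the elementary inequality $1-qst \ge 1-qs$ valid for $s,t\in[0,1]$ (and its symmetric counterpart in $t$), which dominates the mixed factor $(1-qst)^{-(\nsuc-i)}$ by a separable expression. Each double integral then factors into a product of single integrals of the form $\int_0^1 s^{\nsuc-1}(1-qs)^{-k}\,ds$, which can be expanded as a series in $q$ whose terms are ratios of binomial coefficients by linear factors in $\nsuc$, or estimated sharply using Beta-type identities. Summing the contributions over $i$ and carrying out the algebraic simplification should yield \eqref{eq: E vahlogp bound}. The second inequality \eqref{eq: E vahlogp bound guar} then reduces, upon clearing the denominator $(\nsuc-1)(\nsuc-1+p)$, to a low-degree polynomial inequality in $p$ that can be verified directly for $\nsuc\ge 2$ and $p\in(0,1)$.

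The principal obstacle is extracting the specific constants $p/(2\nsuc-1)$, $1/(4\nsuc(2\nsuc-1))$, and $p/(4(\nsuc-1))$ in \eqref{eq: E vahlogp bound}. A naive separable domination of $(1-qst)^{-(\nsuc-i)}$ is loose enough to give only a qualitative bound of order $O(1/\nsuc)$, not the precise inequality stated. To achieve the required sharpness one likely has to combine several of the $\nsuc$ summands before applying the inequality, or refine the separable estimate by a quadratic correction accounting for the gap $qs(1-t)$ between $1-qst$ and $1-qs$, so that the resulting single integrals are evaluated essentially exactly rather than merely bounded. This final bookkeeping is where the bulk of the technical work must lie.
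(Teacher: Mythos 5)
Your setup is correct and genuinely different from the paper's: the integral representation $\harm_{\nsam-1}=\int_0^1(1-t^{\nsam-1})/(1-t)\,\diff t$, the covariance double integral, the identity $a-b=q(1-s)(1-t)$ with $a=(1-qs)(1-qt)$, $b=p(1-qst)$, and the resulting exact expression
\begin{equation*}
\Var[\vahlogp]=q\sum_{i=0}^{\nsuc-1}p^{\nsuc+i}\int_0^1\!\!\int_0^1\frac{(st)^{\nsuc-1}}{(1-qst)^{\nsuc-i}\,\bigl((1-qs)(1-qt)\bigr)^{i+1}}\,\diff s\,\diff t
\end{equation*}
all check out. But the proof stops exactly where the theorem begins. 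The content of \eqref{eq: E vahlogp bound} is its specific constants --- the leading term $(1-p)/(\nsuc-1+p)$ matches the asymptotic variance of $\vahlogp$, so any bound losing a constant factor is useless for deriving \eqref{eq: E vahlogp bound guar} --- and you concede yourself that the separable domination $1-qst\ge 1-qs$ ``is loose enough to give only a qualitative bound of order $O(1/\nsuc)$, not the precise inequality stated.'' The proposed remedies (combining summands, quadratic corrections, ``Beta-type identities'' for $\int_0^1 s^{\nsuc-1}(1-qs)^{-k}\,\diff s$, which are in fact incomplete-Beta/hypergeometric quantities with no clean closed form) are named but not carried out, and there is no evidence that they terminate in the stated bound. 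As it stands this is a promising reformulation of the problem, not a proof.

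For contrast, the paper sidesteps exact evaluation entirely. Its Lemma~\ref{lem: diff harm} sandwiches $\harm_{\nsam-1}-\harm_{\nsuc-1}$ between $\log\frac{\nsam-1/2+\hasbound}{\nsuc-1/2+\hasbound}$ and $\log\frac{\nsam-1/2}{\nsuc-1/2}$ with $0<\hasbound<1/(4\nsuc)$ (a midpoint/trapezium convexity argument), so that by continuity $\vahlogp-\log p=\log\frac{\nsuc-1/2+\hasgen_\vansam}{(\vansam-1/2+\hasgen_\vansam)p}$ for some random $\hasgen_\vansam\in(0,\hasbound)$. Applying Tops{\o}e's inequality $\log^2 x<x+1/x-2$ then reduces $\Var[\vahlogp]$ to expectations of $\vansam$ and of $1/(\vansam-1/2+\hasbound')$, which are controlled by the three known negative-binomial facts $\E[\vansam]=\nsuc/p$, $\E[1/(\vansam-1)]=p/(\nsuc-1)$ and $\E[1/\vansam]<p/(\nsuc-1+p)$ via the interpolation $1/(\nsam-1/2+\hasbound')<(1/2-\hasbound')/(\nsam-1)+(1/2+\hasbound')/\nsam$. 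That is where the constants $p/(2\nsuc-1)$, $1/(4\nsuc(2\nsuc-1))$ and $p/(4(\nsuc-1))$ come from. If you want to salvage your route, the honest target would be to evaluate your double integrals exactly for small $\nsuc$ and prove a clean monotonicity in $\nsuc$; otherwise you need a substitute for the Lemma~\ref{lem: diff harm}--Tops{\o}e step that converts your integrals into moments you can actually compute.
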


The proof of this theorem makes use of the following lemma.

\begin{lemma}
\label{lem: diff harm}
For $\nsuc,\, \nsam \in \mathbb N$, $\nsam > \nsuc$,
\begin{equation}
\label{eq: lema diff harm: bound}
\log \frac {\nsam-1/2+\hasbound} {\nsuc-1/2+\hasbound} < \harm_{\nsam-1} - \harm_{\nsuc-1} < \log \frac {\nsam-1/2} {\nsuc-1/2} 
\end{equation}
with
\begin{align}
\label{eq: hasbound eq}
\hasbound &= \frac{-\nsuc + \sqrt{\nsuc^2+1}}{2}, \\
\label{eq: harmbound bound}
0 < \hasbound &< \frac 1 {4\nsuc}.
\end{align}
\end{lemma}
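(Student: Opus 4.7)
The proof will establish the two inequalities in \eqref{eq: lema diff harm: bound} by deriving per-term bounds for each summand in $\harm_{\nsam-1}-\harm_{\nsuc-1}=\sum_{k=\nsuc}^{\nsam-1} 1/k$ and then telescoping. Specifically, the plan is to show
\[
\log\frac{k+1/2+\hasbound}{k-1/2+\hasbound} < \frac{1}{k} < \log\frac{k+1/2}{k-1/2}
\]
for all $k\geq\nsuc$, after which summation over $k=\nsuc,\dots,\nsam-1$ collapses the log-differences telescopically into $\log\frac{\nsam-1/2+\hasbound}{\nsuc-1/2+\hasbound}$ and $\log\frac{\nsam-1/2}{\nsuc-1/2}$ respectively.

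The upper per-term inequality is the standard midpoint bound for $\int_{k-1/2}^{k+1/2} dx/x$: since $1/x$ is strictly convex on $(0,\infty)$, the midpoint value $1/k$ is strictly below the integral's average, giving $1/k < \log((k+1/2)/(k-1/2))$ for every $k\geq 1$.

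The lower per-term inequality is the delicate part and is where the specific form of $\hasbound$ matters. I would define
\[
h(k) = \log\frac{k+1/2+\hasbound}{k-1/2+\hasbound} - \frac{1}{k}
\]
and argue $h(k)<0$ for $k\geq\nsuc$. A direct differentiation gives
\[
h'(k) = \frac{1}{k^2} - \frac{1}{(k+\hasbound)^2 - 1/4},
\]
so $h'(k)>0$ is equivalent to $2k\hasbound + \hasbound^2 > 1/4$. The defining identity \eqref{eq: hasbound eq} is equivalent to $\hasbound^2+\nsuc\hasbound = 1/4$; substituting this into the threshold condition reduces it to $k>\nsuc/2$. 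Hence $h$ is strictly increasing on $[\nsuc,\infty)$, and since $h(k)\to 0$ as $k\to\infty$, this forces $h(k)<0$ throughout $[\nsuc,\infty)$, which is precisely what telescoping requires.

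Finally, the bounds in \eqref{eq: harmbound bound} are algebraic: $\hasbound>0$ is immediate from $\sqrt{\nsuc^2+1}>\nsuc$, and rationalizing the numerator yields
\[
\hasbound = \frac{1}{2(\nsuc+\sqrt{\nsuc^2+1})} < \frac{1}{4\nsuc}.
\]
The main obstacle is the monotonicity argument for $h$: without recognizing that the quadratic defining $\hasbound$ places the sign change of $h'$ at exactly $\nsuc/2$, the bound $h(k)<0$ on $[\nsuc,\infty)$ would not follow from the asymptotic behavior alone.
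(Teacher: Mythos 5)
Your proof is correct, and for the harder (left) inequality it takes a genuinely different route from the paper's. Both arguments reduce the claim to the per-term bounds $\log\frac{\kvar+1/2+\hasbound}{\kvar-1/2+\hasbound} < \frac{1}{\kvar} < \log\frac{\kvar+1/2}{\kvar-1/2}$ followed by telescoping, and the right-hand bound is in both cases the midpoint-rule inequality for the convex function $1/(x+1/2)$ (stated geometrically in the paper via a tangent trapezium, analytically by you). The difference lies in the left per-term bound. The paper constructs, for each $\kvar$, a shift $\hasbound_\kvar$ chosen so that the chord of $y=1/(x+1/2+\hasbound_\kvar)$ over $[\kvar-1,\kvar]$ bounds a trapezium of area exactly $1/\kvar$; convexity gives $\int_{\kvar-1}^{\kvar}\diff x/(x+1/2+\hasbound_\kvar)<1/\kvar$, and the monotone decrease of $\kvar\mapsto\hasbound_\kvar$ lets it replace every $\hasbound_\kvar$ by $\hasbound=\hasbound_\nsuc$ before summing. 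You instead keep $\hasbound$ fixed and verify the per-term inequality by calculus: your computation $h'(\kvar)=1/\kvar^2-1/((\kvar+\hasbound)^2-1/4)$ is right, the positivity condition $2\kvar\hasbound+\hasbound^2>1/4$ does reduce via the identity $\hasbound^2+\nsuc\hasbound=1/4$ (equivalent to \eqref{eq: hasbound eq}) to $\kvar>\nsuc/2$, and a strictly increasing function with limit $0$ is indeed negative throughout, so $h<0$ on $[\nsuc,\infty)$ as needed. The paper's construction has the merit of explaining where the constant $\hasbound=(-\nsuc+\sqrt{\nsuc^2+1})/2$ comes from (it is the shift that equalizes the chord trapezium at $\kvar=\nsuc$); your argument is shorter and more self-contained, dispensing with the auxiliary sequence and isolating the single algebraic fact $\hasbound^2+\nsuc\hasbound=1/4$ as the reason the sign change of $h'$ lands at $\nsuc/2$. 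The final bounds \eqref{eq: harmbound bound} are handled the same way in both (rationalizing, versus the paper's use of $1<\sqrt{1+x}<1+x/2$), and your version is complete.
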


\begin{proof}
Consider $\nsuc,\, \nsam \in \mathbb N$, $\nsam > \nsuc$. The shaded area in Figure~\ref{fig: lem diff harm upper}, where the heights of the rectangles are $1/\nsuc, \ldots, 1/(\nsam-1)$, equals $\harm_{\nsam-1}-\harm_{\nsuc-1}$. Replacing the top side of each rectangle by the straight line that passes through its middle point and is tangent to the curve $y = 1/(x+1/2)$, a trapezium
is obtained with the same area as the rectangle. The figure shows, with dashed line, the trapezium associated with one of the rectangles. It follows from the convexity of the curve that
\begin{equation}
\harm_{\nsam-1}-\harm_{\nsuc-1} < \int_{\nsuc-1}^{\nsam-1} \frac { \diff x} {x+1/2} = \log \frac {\nsam-1/2} {\nsuc-1/2},
\end{equation}
which establishes the second inequality in \eqref{eq: lema diff harm: bound}.

\begin{figure}%
\centering%
\subfigure[Upper bound]{%
\label{fig: lem diff harm upper}%
\includegraphics[width=.7\textwidth]{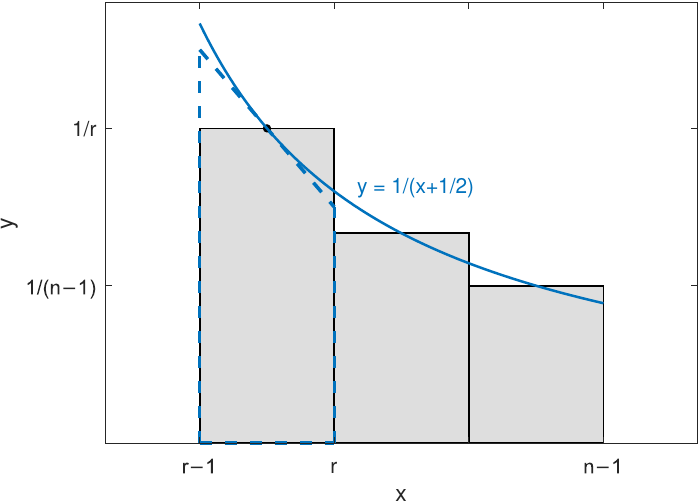}
}\\%
\subfigure[Lower bound]{%
\label{fig: lem diff harm lower}%
\includegraphics[width=.7\textwidth]{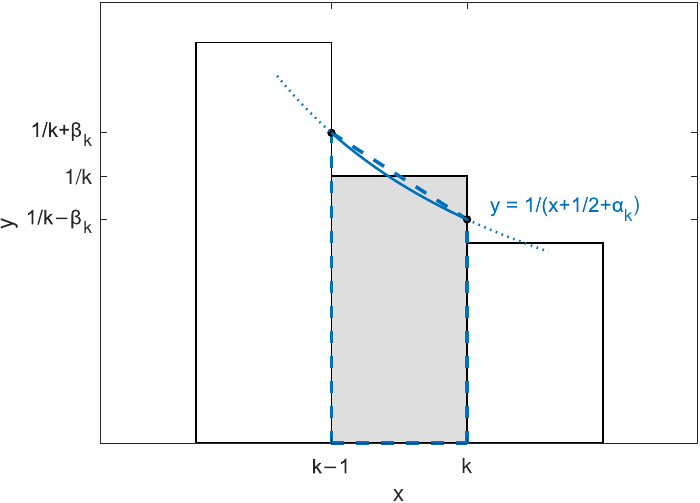}%
}%
\caption{Proof of bounds in \eqref{eq: lema diff harm: bound}}%
\label{fig: lem diff harm}%
\end{figure}%

For the first inequality in \eqref{eq: lema diff harm: bound}, consider the shaded rectangle in Figure~\ref{fig: lem diff harm lower}, which extends from $x=\kvar-1$ to $x=\kvar$ and has height $1/\kvar$, with $\kvar \in \{\nsuc, \ldots, \nsam-1\}$ arbitrary. Let $\hasbound_\kvar > 0$ be chosen so that the intercept points of the curve $y = 1/(x+1/2+\hasbound_\kvar)$ with the vertical lines $x = \kvar-1$ and $x = \kvar$ are vertically equispaced from the horizontal line $y = 1/\kvar$, i.e.~have heights $1/\kvar+\vertdiff_\kvar$ and $1/\kvar-\vertdiff_\kvar$ for some $\vertdiff_\kvar > 0$. That is, $\hasbound_\kvar$ and $\vertdiff_\kvar$ satisfy the equation system
\begin{align}
\frac 1 {\kvar-1/2+\hasbound_\kvar} &= \frac 1 {\kvar} + \vertdiff_\kvar \\
\frac 1 {\kvar+1/2+\hasbound_\kvar} &= \frac 1 {\kvar} - \vertdiff_\kvar,
\end{align}
which yields
\begin{equation}
\label{eq: harmbound kvar eq}
\hasbound_\kvar = \frac {-\kvar + \sqrt{\kvar^2+1}} 2.
\end{equation}
By construction, the area of the shaded rectangle, which is $1/\kvar$, coincides with that of the trapezium shown with dashed line in the figure, whose upper side is the segment that joins the two intercept points of the curve. This segment is a chord of the curve, which is convex, and therefore the area under the curve between $x = \kvar-1$ and $x = \kvar$ is smaller than the shaded area:
\begin{equation}
\label{eq: int <}
\int_{\kvar-1}^\kvar \frac {\diff x}{x+1/2+\hasbound_\kvar} < \frac 1 \kvar.
\end{equation}
The terms $\hasbound_\nsuc, \ldots, \hasbound_{\nsam-1}$ as given by \eqref{eq: harmbound kvar eq} form a decreasing sequence. Thus defining $\hasbound = \hasbound_\nsuc$, as in \eqref{eq: hasbound eq}, ensures that $\hasbound > \hasbound_\kvar$ for any $\kvar > \nsuc$, and therefore, using \eqref{eq: int <},
\begin{equation}
\begin{split}
\int_{\nsuc-1}^{\nsam-1} \frac {\diff x}{x+1/2+\hasbound} &= \log \frac {\nsam-1/2+\hasbound}{\nsuc-1/2+\hasbound} \\
&\leq \sum_{\kvar=\nsuc}^{\nsam-1} \int_{\kvar-1}^{\kvar} \frac {\diff x}{x+1/2+\hasbound_\kvar}
< \sum_{\kvar=\nsuc}^{\nsam-1} \frac 1 \kvar = \harm_{\nsam-1} - \harm_{\nsuc-1},
\end{split}
\end{equation}
which proves the first inequality in \eqref{eq: lema diff harm: bound}.

Lastly, rewriting \eqref{eq: hasbound eq} as
\begin{equation}
\hasbound = \frac {-\nsuc + \nsuc \sqrt{1 + 1/\nsuc^2}} 2
\end{equation}
and using the inequality $1 < \sqrt{1+x} < 1+x/2$, $x \neq 0$ yields \eqref{eq: harmbound bound}.
\end{proof}

\begin{proof}[Proof of Theorem~\ref{theo: vahlogp}]
Consider $\nsuc \in \mathbb N$, $\nsuc \geq 2$ and $p \in (0,1)$, and let $\hasbound$ be defined as in Lemma \ref{lem: diff harm}. Noting that $\log((\nsam-1/2-\hasgen)/(\nsuc-1/2-\hasgen))$ is a continuous function of $\hasgen \in [0, \hasbound]$, for $\nsam > \nsuc$ it stems from the lemma that there is $\hasgen_\nsam \in (0, \hasbound)$ such that
\begin{equation}
\harm_{\nsam-1} - \harm_{\nsuc-1} = \log \frac {\nsam-1/2+\hasgen_\nsam} {\nsuc-1/2+\hasgen_\nsam},
\end{equation}
and for $\nsam = \nsuc$ this obviously holds as well. Therefore, $\Var[\vahlogp]$ can be written as
\begin{equation}
\label{eq: Var vahlogp 1}
\Var[\vahlogp] = \E[(-\harm_{\vansam-1}+\harm_{\nsuc-1}-\log p)^2] = \E\left[ \log^2 \frac {\nsuc-1/2+\hasgen_\vansam} {(\vansam-1/2+\hasgen_\vansam)p} \right],
\end{equation}
where $\hasgen_\vansam$ is a random variable that takes values in $(0, \hasbound)$. For convenience, let
\begin{equation}
\label{eq: hasbound prime}
\hasbound' = \frac 1 {4\nsuc}.
\end{equation}

Making use of the inequality
\begin{equation}
\label{eq: Topsoe}
\log ^2 x < \frac 1 x + x - 2,\ \ \ x > 0, 
\end{equation}
which results from \citet[equation~(14)]{Topsoe07}, and taking into account that $0 < \hasgen_\vansam < \hasbound < \hasbound'$, \eqref{eq: Var vahlogp 1} becomes
\begin{align}
\Var[\vahlogp] &< \E\left[ \frac {(\vansam-1/2+\hasgen_\vansam)p} {\nsuc-1/2+\hasgen_\vansam} \right] + \E\left[ \frac {\nsuc-1/2+\hasgen_\vansam} {(\vansam-1/2+\hasgen_\vansam)p} \right] - 2 \\
\label{eq: Var vahlogp 2}
&< \E\left[ \frac {(\vansam-1/2)p} {\nsuc-1/2} \right] + \E\left[ \frac {\nsuc-1/2+\hasbound'} {(\vansam-1/2+\hasbound')p} \right] - 2.
\end{align}
Computing the first summand in \eqref{eq: Var vahlogp 2} is straightforward, in view of \eqref{eq: neg bin: E vansam}. For the second, an upper bound is obtained next. It can be easily checked that, for $\nsam > 1$ and $|\hasbound'| < 1/2$,
\begin{equation}
\label{eq: inv minus 1/2 <}
\frac 1 {\nsam-1/2+\hasbound'} < \frac{1/2-\hasbound'}{\nsam-1} + \frac{1/2+\hasbound'}{\nsam} .
\end{equation}
Using \eqref{eq: neg bin: E inv minus 1} and the inequality \citep[section~II]{Mendo06}
\begin{equation}
\E \left[ \frac 1 {\vansam} \right] < \frac p {\nsuc-1+p},
\end{equation}
it follows from \eqref{eq: inv minus 1/2 <} that
\begin{equation}
\label{eq: Var vahlogp 2, second summand <}
\E\left[ \frac 1 {(\vansam-1/2+\hasbound')p} \right]
< \frac{1/2-\hasbound'}{\nsuc-1} + \frac{1/2+\hasbound'}{\nsuc-1+p} = \frac{\nsuc-1 + (1/2-\hasbound')p}{(\nsuc-1)(\nsuc-1+p)}.
\end{equation}
Thus, using \eqref{eq: neg bin: E vansam} and \eqref{eq: Var vahlogp 2, second summand <} into \eqref{eq: Var vahlogp 2},
\begin{equation}
\label{eq: Var vahlogp 3}
\begin{split}
\Var[\vahlogp] &< \frac {\nsuc-p/2} {\nsuc-1/2} + \frac{(\nsuc-1/2+\hasbound') (\nsuc-1 + (1/2-\hasbound')p)}{(\nsuc-1)(\nsuc-1+p)} - 2 \\
&= \frac {1-p} {2\nsuc-1} + \frac {(\nsuc-1)(1/2+\hasbound')(1-p) + (1/4-\hasbound'^2)p} {(\nsuc-1)(\nsuc-1+p)} \\
&= \left(\frac {1/2} {\nsuc-1/2} + \frac {1/2+\hasbound'} {\nsuc-1+p} \right)(1-p) + \frac {(1/4-\hasbound'^2)p} {(\nsuc-1)(\nsuc-1+p)}.
\end{split}
\end{equation}
Substituting \eqref{eq: hasbound prime},
\begin{equation}
\label{eq: Var vahlogp 4}
\begin{split}
\Var[\vahlogp]
&< \left(\frac {1/2} {\nsuc-1/2} + \frac {1/2+1/(4\nsuc)} {\nsuc-1+p} \right)(1-p) + \frac {(1/4-1/(16\nsuc^2))p} {(\nsuc-1)(\nsuc-1+p)} \\
&= \left( \frac 1 {\nsuc-1+p} + \frac {p/2-1/(8\nsuc)}{(\nsuc-1+p)(\nsuc-1/2)} \right)(1-p) + \frac {(1/4-1/(16\nsuc^2))p}
 {(\nsuc-1)(\nsuc-1+p)} \\
&< \frac 1 {\nsuc-1+p} \left( \left( 1 + \frac {p-1/(4\nsuc)} {2\nsuc-1} \right) (1-p) + \frac p {4(\nsuc-1)} \right),
\end{split}
\end{equation}
which gives \eqref{eq: E vahlogp bound}.

As a consequence of the above,
\begin{align}
\Var[\vahlogp]
&< \frac 1 {\nsuc-1} \left( \left( 1 + \frac {p} {2\nsuc-1} \right) (1-p) + \frac p {4(\nsuc-1)} \right) \\
\label{eq: Var vahlogp 5}
&< \frac 1 {\nsuc-1} \, \frac {2\nsuc^2(1-p)-\nsuc(3-9p/2+p^2)+1-9p/4+p^2} { (2\nsuc-1)(\nsuc-1) }.
\end{align}
The fraction with denominator $(2\nsuc-1)(\nsuc-1)$ in \eqref{eq: Var vahlogp 5} is easily seen to be less than $1$ for $\nsuc \geq 2$, $p \in (0,1)$, and thus \eqref{eq: E vahlogp bound guar} holds.
\end{proof}

Based on Theorem~\ref{theo: vahlogp}, the estimator of $\logodds = \log(p/(1-p))$ is defined in the following way (akin to the definition of $\vahodds$ in Section~\ref{part: odds}). Firstly, given $\nsuc \in \mathbb N$, $\nsuc \geq 2$, inverse binomial sampling with $\nsuc$ successes is applied to a sequence of \iid~Bernoulli random variables with parameter $p$. This requires a random number of samples $\vansam_1$, from which an estimate $\vahlogp_1$ of $\log p$ is obtained as in \eqref{eq: vahlogp}. Secondly, an analogous procedure, with the same $\nsuc$, is applied using new observations from the sequence of \iid~Bernoulli variables, except that the roles of successes and failures are exchanged. That is, samples are taken until $\nsuc$ \emph{failures} are observed. This consumes a random number of samples $\vansam_2$, and provides an estimate $\vahlogp_2$  of $\log (1-p)$. Then, the estimate $\vahlogodds$ of $\logodds$ is obtained as
\begin{equation}
\label{eq: vahlogodds}
\vahlogodds = \vahlogp_1 - \vahlogp_2 = -\harm_{\vansam_1-1} + \harm_{\vansam_2-1}.
\end{equation}
Let $\vansam = \vansam_1 + \vansam_2$ be the total number of samples used by the estimator $\vahlogodds$.

\begin{theorem}
\label{theo: vahlogodds}
For $\nsuc \in \mathbb N$, $\nsuc \geq 2$, $p \in (0,1)$, the estimator $\vahlogodds$ defined by \eqref{eq: vahlogodds} has the following properties:
\begin{align}
\label{eq: E vahlogodds unbiased}
\E[\vahlogodds] &= \log \frac p {1-p} \\
\label{eq: Var vahlogodds bound}
\Var[\vahlogodds] &< \frac { \nsuc^2-\nsuc/4-1/4 } { (\nsuc-1+p)(\nsuc-p)(\nsuc-1/2) } - \frac{p(1-p)}{(\nsuc-1/2)^2} \left( 1 - \frac{1}{2\nsuc-3} \right) \\
\label{eq: Var vahlogodds bound guar}
&<
\frac 1 {\nsuc-5/4} \left( 1 - \frac{7}{(4\nsuc-1)^2} \right) \\
\label{eq: vahlogodds sample size}
\E[\vansam] &= \frac{\nsuc}{p(1-p)}.
\end{align}
\end{theorem}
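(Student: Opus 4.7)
The plan is to proceed in four steps, one for each of the four claims, exploiting the statistical independence of $\vahlogp_1$ and $\vahlogp_2$ (which holds because the two estimators are built from disjoint sets of Bernoulli observations).

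First, for \eqref{eq: E vahlogodds unbiased} I would note that $\vahlogp_1$ is unbiased for $\log p$ by Proposition~\ref{prop: vahlogp harm}. An analogous application of the same proposition, with the roles of success and failure exchanged, shows that $\vahlogp_2$ is unbiased for $\log(1-p)$. Linearity of expectation then gives $\E[\vahlogodds]=\log p-\log(1-p)=\logodds$. For the sample size \eqref{eq: vahlogodds sample size}, I would use \eqref{eq: neg bin: E vansam}: $\vansam_1$ is negative binomial with parameters $\nsuc,p$, so $\E[\vansam_1]=\nsuc/p$; symmetrically $\E[\vansam_2]=\nsuc/(1-p)$; hence $\E[\vansam]=\nsuc/p+\nsuc/(1-p)=\nsuc/(p(1-p))$.

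Next, for \eqref{eq: Var vahlogodds bound}, independence yields $\Var[\vahlogodds]=\Var[\vahlogp_1]+\Var[\vahlogp_2]$. I would apply the bound \eqref{eq: E vahlogp bound} of Theorem~\ref{theo: vahlogp} to $\Var[\vahlogp_1]$, and the same bound with $p$ replaced by $1-p$ to $\Var[\vahlogp_2]$. Adding the two upper bounds produces an expression symmetric in $p$ and $1-p$; the main obstacle in the proof is then the purely algebraic task of putting this sum over the common denominator $(\nsuc-1+p)(\nsuc-p)(\nsuc-1/2)$ and collecting terms so that the result matches the right-hand side of \eqref{eq: Var vahlogodds bound}. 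Because the symmetrized bound contains only the quantities $p+(1-p)=1$ and $p(1-p)$, I expect the two leading ``$(1-p)$'' and ``$p$'' terms coming from the first summand of \eqref{eq: E vahlogp bound} to merge into a single factor $1$ while the ``$-1/(4\nsuc(2\nsuc-1))$'' corrections combine with $p(1-p)$ to produce the subtracted $p(1-p)/(\nsuc-1/2)^2$ term shown in \eqref{eq: Var vahlogodds bound}.

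Finally, for \eqref{eq: Var vahlogodds bound guar} I need to upper bound the right-hand side of \eqref{eq: Var vahlogodds bound} uniformly over $p\in(0,1)$. The factor $(\nsuc-1+p)(\nsuc-p)$ is a concave parabola in $p$ with maximum at $p=1/2$, so the first summand in \eqref{eq: Var vahlogodds bound} is maximized in the interior of $(0,1)$, whereas the subtracted term $p(1-p)/(\nsuc-1/2)^2\,(1-1/(2\nsuc-3))$ is simultaneously maximized at $p=1/2$. My plan is therefore to substitute $p(1-p)=t$ with $t\in(0,1/4]$, rewrite $(\nsuc-1+p)(\nsuc-p)=\nsuc(\nsuc-1)+t$, and show that the resulting one-variable expression is monotone in $t$, so its supremum is attained in the limit $t\to 1/4$ (i.e.\ $p\to 1/2$). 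Substituting $t=1/4$ reduces the problem to verifying an inequality that is polynomial in $\nsuc$, and I expect it to match the target bound $(\nsuc-5/4)^{-1}(1-7/(4\nsuc-1)^2)$ after clearing denominators. The trickiest step is likely this last verification, where the constants $5/4$ and $7$ must emerge exactly, so I would keep track of every term carefully rather than use loose estimates.
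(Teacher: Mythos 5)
Your treatment of \eqref{eq: E vahlogodds unbiased}, \eqref{eq: vahlogodds sample size} and \eqref{eq: Var vahlogodds bound} follows the paper's route: independence of $\vahlogp_1$ and $\vahlogp_2$, Theorem~\ref{theo: vahlogp} applied once with $p$ and once with $p$ replaced by $1-p$, and algebraic consolidation of the symmetrized sum, which (as you correctly anticipate) depends on $p$ only through $p(1-p)$. The paper additionally needs the inequalities $(\nsuc-1+p)(\nsuc-p)\leq(\nsuc-1/2)^2$ and $1/(\nsuc-1/2)+1/(\nsuc(\nsuc-1))<1/(\nsuc-3/2)$ to force the subtracted term into the exact form $p(1-p)/(\nsuc-1/2)^2\,\bigl(1-1/(2\nsuc-3)\bigr)$; your ``collect terms'' plan would have to reproduce these, but that is routine bookkeeping.

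The genuine gap is in your final step: the monotonicity is backwards. Writing $t=p(1-p)\in(0,1/4]$ and $(\nsuc-1+p)(\nsuc-p)=\nsuc(\nsuc-1)+t$ as you propose, the first summand of \eqref{eq: Var vahlogodds bound} is
\begin{equation*}
\frac{\nsuc^2-\nsuc/4-1/4}{\bigl(\nsuc(\nsuc-1)+t\bigr)(\nsuc-1/2)},
\end{equation*}
which is \emph{decreasing} in $t$ because the concave parabola sits in the \emph{denominator}; it is therefore maximized at the endpoints $p\to 0,1$, not in the interior. The second summand is the negative of an increasing function of $t$, hence also decreasing in $t$. So the right-hand side of \eqref{eq: Var vahlogodds bound} is decreasing in $t$ and its supremum is attained as $t\to 0$, i.e.\ $p\to 0$ or $p\to 1$ --- exactly what the paper establishes by differentiation. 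Substituting $t=1/4$ as you intend evaluates the bound at its \emph{infimum} over $p$ (the computation there is clean, since $\nsuc(\nsuc-1)+1/4=(\nsuc-1/2)^2$, and the resulting inequality does hold), but it says nothing about $p$ near $0$ or $1$ and therefore cannot establish the uniform bound \eqref{eq: Var vahlogodds bound guar}. The correct extremal value is $(\nsuc^2-\nsuc/4-1/4)/\bigl(\nsuc(\nsuc-1/2)(\nsuc-1)\bigr)$, from which the paper extracts the factor $1/(\nsuc-5/4)$ and bounds the residual numerator $7\nsuc-5$ to produce the constant $7/(4\nsuc-1)^2$. If you carry out your own monotonicity check honestly you will discover the reversal and can repair the argument by taking $t\to 0$ instead; as written, however, the step fails.
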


\begin{proof}
The equality \eqref{eq: E vahlogodds unbiased} follows from \eqref{eq: vahlogodds} and the fact that $\vahlogp_1$ and $\vahlogp_2$ are unbiased.

The bound \eqref{eq: E vahlogp bound} from Theorem~\ref{theo: vahlogp} can be rewritten as
\begin{equation}
\label{eq: E vahlogp bound 2}
\Var[\vahlogp] < \left(\frac 1 2 + \frac 1 {4\nsuc}\right) \frac{1-p}{\nsuc-1+p} + \frac {1-p} {2\nsuc-1} + \frac p {4(\nsuc-1+p)(\nsuc-1)}.
\end{equation}
Since $\vahlogp_1$ and $\vahlogp_2$ are statistically independent, $\Var[\vahlogodds] = \Var[\vahlogp_1] + \Var[\vahlogp_2]$. Applying \eqref{eq: E vahlogp bound 2} to bound $\Var[\vahlogp_1]$ and $\Var[\vahlogp_2]$, with $p$ replaced by $1-p$ for the latter, gives
\begin{equation}
\begin{split}
\Var[\vahlogodds] &< \left(\frac 1 2 + \frac 1 {4\nsuc}\right) \left(\frac{1-p}{\nsuc-1+p} + \frac{p}{\nsuc-p} \right) + \frac {1} {2\nsuc-1} \\
&\quad + \frac 1 {4(\nsuc-1)} \left( \frac p {\nsuc-1+p} + \frac {1-p} {\nsuc-p} \right) \\
&= \frac{\nsuc-2p(1-p)}{2(\nsuc-1+p)(\nsuc-p)} +\frac{\nsuc-2p(1-p)}{4\nsuc(\nsuc-1+p)(\nsuc-p)} + \frac {1} {2\nsuc-1} \\
&\quad + \frac {\nsuc-1+2p(1-p)}{4(\nsuc-1)(\nsuc-1+p)(\nsuc-p)}.
\end{split}
\end{equation}
Rearranging,
\begin{equation}
\begin{split}
\Var[\vahlogodds] &< \frac {1} {2\nsuc-1}  + \frac{\nsuc+1}{2(\nsuc-1+p)(\nsuc-p)} - \frac {p(1-p)} {(\nsuc-1+p)(\nsuc-p)} \left( 1 - \frac 1 {2\nsuc(\nsuc-1)} \right) \\
&= \frac{\nsuc^2-\nsuc/4-1/4}{(\nsuc-1+p)(\nsuc-p)(\nsuc-1/2)} \\
&\quad - \frac{p(1-p)} {(\nsuc-1+p)(\nsuc-p)} \left( 1 - \frac {1/2} {\nsuc-1/2} - \frac {1/2} {\nsuc(\nsuc-1)} \right),
\end{split}
\end{equation}
which combined with the inequalities $(\nsuc-1+p)(\nsuc-p) \leq (\nsuc-1/2)^2$ and
\begin{equation}
\frac 1 {\nsuc-1/2} + \frac 1 {\nsuc(\nsuc-1)} = \frac{\nsuc^2-1/2}{\nsuc(\nsuc-1/2)(\nsuc-1)} < \frac{\nsuc^2-1/4}{\nsuc(\nsuc-1/2)(\nsuc-1)} < \frac 1 {\nsuc-3/2}
\end{equation}
produces \eqref{eq: Var vahlogodds bound}.

Differentiating with respect to $p$, the right-hand side of \eqref{eq: Var vahlogodds bound} is seen to be decreasing for $p <1/2$ and increasing for $p>1/2$. Therefore its supremum occurs, by symmetry, when $p$ tends to either $0$ or $1$. This gives
\begin{equation}
\begin{split}
\Var[\vahlogodds] &< \frac{\nsuc^2-\nsuc/4-1/4}{\nsuc(\nsuc-1/2)(\nsuc-1)} = \frac{\nsuc^3-3\nsuc^2/2+\nsuc/16+5/16}{\nsuc(\nsuc-1/2)(\nsuc-1)(\nsuc-5/4)} \\
&= \frac 1 {\nsuc-5/4} \left( 1 - \frac{7\nsuc-5}{16\nsuc(\nsuc-1/2)(\nsuc-1)} \right) \\
&< \frac 1 {\nsuc-5/4} \left( 1 - \frac{7}{16\nsuc(\nsuc-1/2)} \right)
< \frac 1 {\nsuc-5/4} \left( 1 - \frac{7}{16(\nsuc-1/4)^2} \right) \\
&= \frac 1 {\nsuc-5/4} \left( 1 - \frac{7}{(4\nsuc-1)^2} \right),
\end{split}
\end{equation}
in accordance with \eqref{eq: Var vahlogodds bound guar}.

Lastly, $E[\vansam] = E[\vansam_1] + E[\vansam_2] = \nsuc/p + \nsuc/(1-p) = \nsuc/(p(1-p))$, which proves \eqref{eq: vahlogodds sample size}.
\end{proof}

By Theorem~\ref{theo: vahlogodds}, $\vahlogodds$ is an unbiased estimator of $\log(p/(1-p))$. Furthermore, choosing $\nsuc$ large enough guarantees, according to \eqref{eq: Var vahlogodds bound guar}, an error variance as low as desired irrespective of $p$; at the cost of an increase in average sample size proportional to $\nsuc$, as given by \eqref{eq: vahlogodds sample size}. Thus, for example, to ensure an RMSE smaller than $0.2$ it suffices to use $\nsuc = 27$ (which actually guarantees that the RMSE is less than $0.1971$). Reducing the target RMSE by a factor of $1/\sqrt{2}$, or the MSE by half, requires $r = 52$.

As in Section~\ref{part: odds}, the efficiency of $\vahlogodds$ can be used as a measure of quality of this estimator. Particularizing \eqref{eq: effic vahparamgen} for $\paramgen = \logodds$ and $\vahparamgen = \vahlogodds$, which gives $\diff \paramgen/\diff p = 1/(p(1-p))$, and making use of \eqref{eq: Var vahlogodds bound}--\eqref{eq: vahlogodds sample size},
\begin{align}
\label{eq: effic vahlogodds eq}
\effic(\vahlogodds) &= \frac{1}{p(1-p) \Var[\vahodds] \E[\vansam]} \\[1mm] 
\label{eq: effic vahlogodds bound 1}
& > \frac {1} { \displaystyle \frac { \nsuc^3-\nsuc^2/4-\nsuc/4 } { (\nsuc-1+p)(\nsuc-p)(\nsuc-1/2) } - \frac{\nsuc p(1-p)}{(\nsuc-1/2)^2} \left( 1 - \frac{1}{2\nsuc-3} \right) } \\[1mm] 
\label{eq: effic vahlogodds bound 2}
&> \frac{\nsuc - 5/4} {\nsuc} \,  \frac {(4\nsuc-1)^2}{(4\nsuc-1)^2-7} > \frac{\nsuc - 5/4} {\nsuc}.
\end{align}
This shows that, irrespective of $p$, the efficiency of $\vahlogodds$ is close to $1$ for moderate or large values of $\nsuc$, and tends to $1$ as $\nsuc$ grows.


The lower bound \eqref{eq: effic vahlogodds bound 1} is compared in Figure~\ref{fig: logodds_effic} with efficiency values obtained by Monte Carlo simulation, using the same procedure as in Section~\ref{part: odds}. As can be seen, the bound is less tight in this case, compared with the bound \eqref{eq: effic vahodds bound 1} for $\effic(\vahodds)$. This could be expected, because the expression of $\vahlogp$, on which $\vahlogodds$ is based, is more difficult to deal with than that of $\vahodds$ (in the proof of Theorem~\ref{theo: vahlogp}, $\Var[\vahlogp]$ is expressed as the expected value of the square of a certain logarithm, which is then bounded by a simpler function). Still, the uniform bound \eqref{eq: effic vahlogodds bound 2} is greater than the corresponding one for $\vahodds$, given by \eqref{eq: effic vahodds bound 2}; and, as was the case for that estimator, it becomes tighter as $\nsuc$ increases.

\begin{figure}%
\centering%
\includegraphics[width=.75\textwidth]{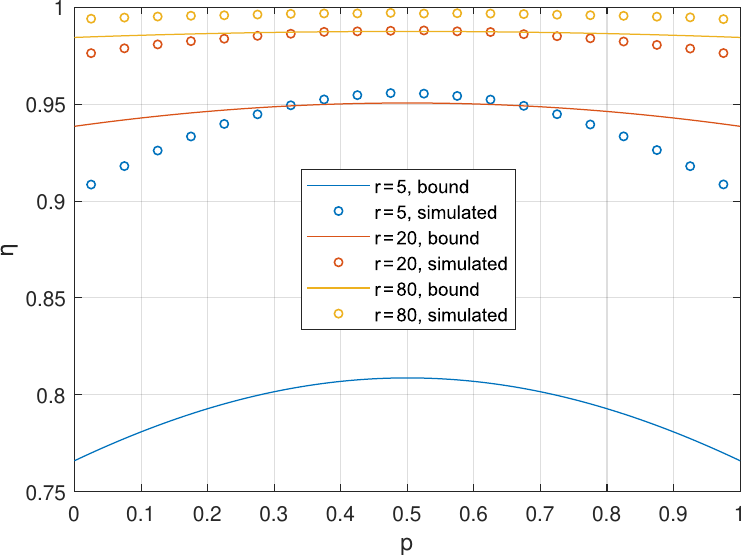}
\caption{Efficiency of log odds estimator $\vahlogodds$}%
\label{fig: logodds_effic}%
\end{figure}%

\section{Discussion and further work}
\label{part: disc}

Two sequential estimators have been proposed to estimate the odds $p/(1-p)$ and the log odds $\log(1/(1-p))$, respectively, from \iid~Bernoulli observations with parameter $p$. The estimators are unbiased, and guarantee that the relative variance (for odds) or the variance (for log odds) are below a desired value, irrespective of $p$. As far as the author is aware, no previously known estimators of the odds or log odds have this property. The guaranteed accuracy level is determined by the parameter $\nsuc$. Larger values of $\nsuc$ ensure better accuracy, at the cost of increasing the average sample size. Thus the parameter $\nsuc$ can be used to control the trade-off between these two magnitudes. The performance of the estimators is close to optimal in the sense of Wolfowitz's bound, with efficiency values greater than $(\nsuc-1)/(\nsuc+1)$ and $(\nsuc-5/4)/\nsuc$ for each estimator respectively.

The problem of estimating the odds or log odds is relevant in many areas of science and engineering. The proposed estimators can be used in any application, as long as the observations are \iid~Bernoulli variables.

Each of the two estimators is based on applying inverse binomial sampling separately with parameters $p$ and $1-p$, where the latter is obtained by swapping the role of successes and failures in the Bernoulli samples. This approach might appear wasteful, because information about both $p$ and $1-p$ could in principle be inferred using only one of the two sample sets. However, since the two probabilities are complementary, it is never the case that the two separate procedures require a large average number of samples at the same time; and indeed performance turns out to be close to optimal, as stated.

On the other hand, using a single set of samples to infer $p/(1-p)$ or its logarithm would pose some problems. As follows from \citet[theorem~4.1]{DeGroot59},
there is no unbiased estimator of these parameters in inverse binomial sampling. For $(1-p)/p$ an unbiased estimator does exist; but then it is not possible to guarantee a relative MSE, because as $p$ grows the sample size decreases, when it should increase so as to reduce the MSE. By similar arguments, an unbiased estimator of $\log((1-p)/p)$ exists but is not useful either. For fixed-size sampling there are no unbiased estimators of any of these quantities \citep[section~4]{DeGroot59}; and in any case, with a fixed sample size it is not possible to guarantee a desired relative MSE for odds, or a desired MSE for log odds. Moreover, regardless of the rule used for deciding the sample size, using a single sample set can make the error analysis more difficult. Namely, if the estimator is expressed as the product or sum of two estimators (as has been done in Sections~\ref{part: odds} and \ref{part: log odds}), those two estimators are not statistically independent, because they are based on the same observations.

The approach used in this paper could in principle be extended to estimating the probability ratio (or relative risk) $p_1/p_2$ or the odds ratio $p_1(1-p_2)/(p_2(1-p_1))$, as well as their logarithms, for two populations with parameters $p_1$ and $p_2$. For instance, to estimate $p_1/p_2$ with guaranteed relative MSE, two inverse binomial sampling procedures would be used for estimating $p_1$ and $1/p_2$ respectively; and error bounds analogous to those derived in the paper are easy to obtain. However, an important limitation of using this method with two populations is that it does not provide control on the proportion of samples taken from each population. Although it is possible to choose different pairs of $\nsuc_1$, $\nsuc_2$ that guarantee approximately the same estimation accuracy, the choice of $\nsuc_1$, $\nsuc_2$ does not determine the distribution of $\vansam_1/\vansam_2$, or the ratio $\E[\vansam_1]/\E[\vansam_2]$, as these depend also on $p_1$, $p_2$. Thus, each value of the unknown ratio $p_1/p_2$ would require a different choice of $\nsuc_1$, $\nsuc_2$ in order to attain a given $\E[\vansam_1]/\E[\vansam_2]$ and a desired accuracy.

Having control on $\vansam_1/\vansam_2$, or at least on $\E[\vansam_1]/\E[\vansam_2]$, when sampling from two populations is often important in applications. For example, if observations are taken in pairs, one sample from each population, then necessarily $\vansam_1 = \vansam_2$. Further work is needed to devise a method that can incorporate this type of restriction while guaranteeing a given accuracy. One possible approach is to use two-stage sampling:
a small sample set of each population is taken to produce an initial, rough estimate of $p_1/p_2$; using that information the  sizes of a second sample set of each population are obtained (for example, using two inverse binomial sampling procedures with $\nsuc_1$, $\nsuc_2$ computed from the estimated $p_1/p_2$); and from these new sample sets the final estimation of the parameter of interest is obtained.

Another generalization of this work would be to allow for the observations to be statistically dependent. In this more general setting, a natural assumption is that the sequence of observations is exchangeable. This means that its joint distribution of any order is invariant to permutations, i.e.~the statistical properties of the sequence are not affected by changes in the position of its elements. Then, the correlation between any pair of samples is the same, and is always non-negative \citep{Kingman78}.
In these conditions, the estimators proposed in this paper are not necessarily unbiased, and their variance is expected to be larger than in the \iid~case, because, for the same sample size, positively correlated observations are ``less informative'' than independent ones. Alternatively, dependence between samples can be modelled by assuming that the Bernoulli variables form a Markov chain, where the transition matrix is determined by the usual parameter $p$ and an additional parameter that controls the correlation between consecutive samples \citep{Lindqvist78}. This will, again, introduce bias and affect the variance of the estimators.


\end{document}